\DeclareMathOperator{\rank}{rank}
\title{A continuation approach to the optimization of hydropower operations}
\author{Jorn H. Baayen \and Julia Rauw \and Teresa Piovesan}
\institute{Jorn H. Baayen \at KISTERS Nederland B.V.,
              Piet Mondriaanplein 13-31, 3812 GZ Amersfoort, The Netherlands \\
              \email{jorn.baayen@kisters-bv.nl} \\
              Telephone: +49 (0)151 54433908 \\
              ORCID 0000-0002-1905-3677 \\
            \and 
            Julia Rauw \at  FH Aachen - University of Applied Sciences, Faculty of Mechanical 				Engineering and Mechatronics,
			Goethestrasse 1, 52064 Aachen, Germany\\
           \and
           Teresa Piovesan \at
            Deltares,
              Postbus 177,
              2600 MH Delft,
              The Netherlands
}
\date{\today}
\begin{document}
\maketitle

\begin{abstract}
The instantaneous power generation from a hydroelectric turbine is proportional to the product of head difference and turbine flow.  The equation relating power to hydraulic variables is therefore nonlinear.  Hence, optimization problems subject to this relation, such as release schedule optimization, are nonconvex and may admit multiple local isolated minima.  This renders such problems problematic for use in operational model predictive control.  This paper shows that release schedule optimization problems subject to the nonlinear turbine generation equation may be set up using a continuation approach to be both \emph{zero-convex} and \emph{path stable}.  In this way such optimization problems become suitable for decision support systems based on model predictive control.  An example problem is studied, and it is shown that significant productivity gains may be realized using the presented methodology.
\end{abstract}

\section{Introduction}

The question of how to best operate a cascade of reservoirs for hydroelectric power generation has been studied for more than half a century \cite{lane1944tva,yeh1985reservoir,labadie2004optimal,reed2013evolutionary}, and probably longer.  It is a rich question in the sense that it involves several challenging factors:  (I) multiple goals and objectives, (II) inflow uncertainty, (III) quantification of the value of reservoir water at the end of the planning horizon, and (IV) the nonlinear relation between discharge, head difference, and hydroelectric power generation.

With regards to (I), the \emph{multiple objectives} typically stem from the various competing uses of reservoir water, such as water supply, flood control, recreation, and power generation.  Typically, a priority ordering is assigned to the various uses, which then maps naturally to a multi-objective optimization technique knows as lexicographic or sequential goal programming, see, e.g., \cite{Eschenbach2001,gijsbers2017}.

With regards to (II), \emph{inflow uncertainty}, much has been published, see e.g. \cite{pereira1989optimal,raso2014short,Gauvin2015,Braaten2016}.  Various satisfying techniques are available to incorporate this uncertainty.  Further discussion of these is beyond the scope of this work.

With regards to (III), the \emph{terminal cost problem}, the solution for short term planning is to refer to the annual rule curve;  for long term planning, annual periodicity may be imposed.  Alternatively, techniques based on the approximation of the Bellman value function or using Pontryagin's maximum principle produce optimal releases based on current system state only and hence do not require a terminal cost to be specified \cite{pereira1989optimal}, at the expense of being unable to react to salient features in short-term forecasts.  Again, further discussion of these issues is beyond the scope of this work.

In this paper, we focus on (IV), the \emph{generation nonlinearity}, 
\begin{equation}\label{eq:core}
P = g \rho \eta(Q,H_u,H_d) Q \Delta H,
\end{equation}
with instantaneous hydroelectric generation $P$, gravitational constant $g$, density of water $\rho$, reservoir water level $H_u$, downstream water level $H_d$, turbine efficiency $\eta=\eta(Q,H_u,H_d)$, and head difference $\Delta H := H_u - H_d$.  This equation is nonlinear and hence, if included as an equality constraint in an optimization problem, results in a formulation that is neither linear nor convex.  As optimization problems which are not convex admit multiple locally optimal solutions \cite{Boyd2004}, this results in a state of affairs where different optimization algorithms, whether gradient-based or evolutionary, may result in different solutions.  Worse, different settings of the same algorithm may produce different results.  This state of affairs is highly undesirable in an operational setting, where predictable and consistent decision support is expected.

Further insight is obtained by noting that, to obtain a particular generation $P$, this can either be achieved by a ``high'' turbine flow at ``low'' head, or by ``low'' turbine flow at ``high'' head; see also the isolines in Figure \ref{fig:isolines}.  Without a preference for low or high head being articulated, the generation request tracking problem is ambiguous.

\begin{figure}[h]
\centering
  \includegraphics[scale=0.5]{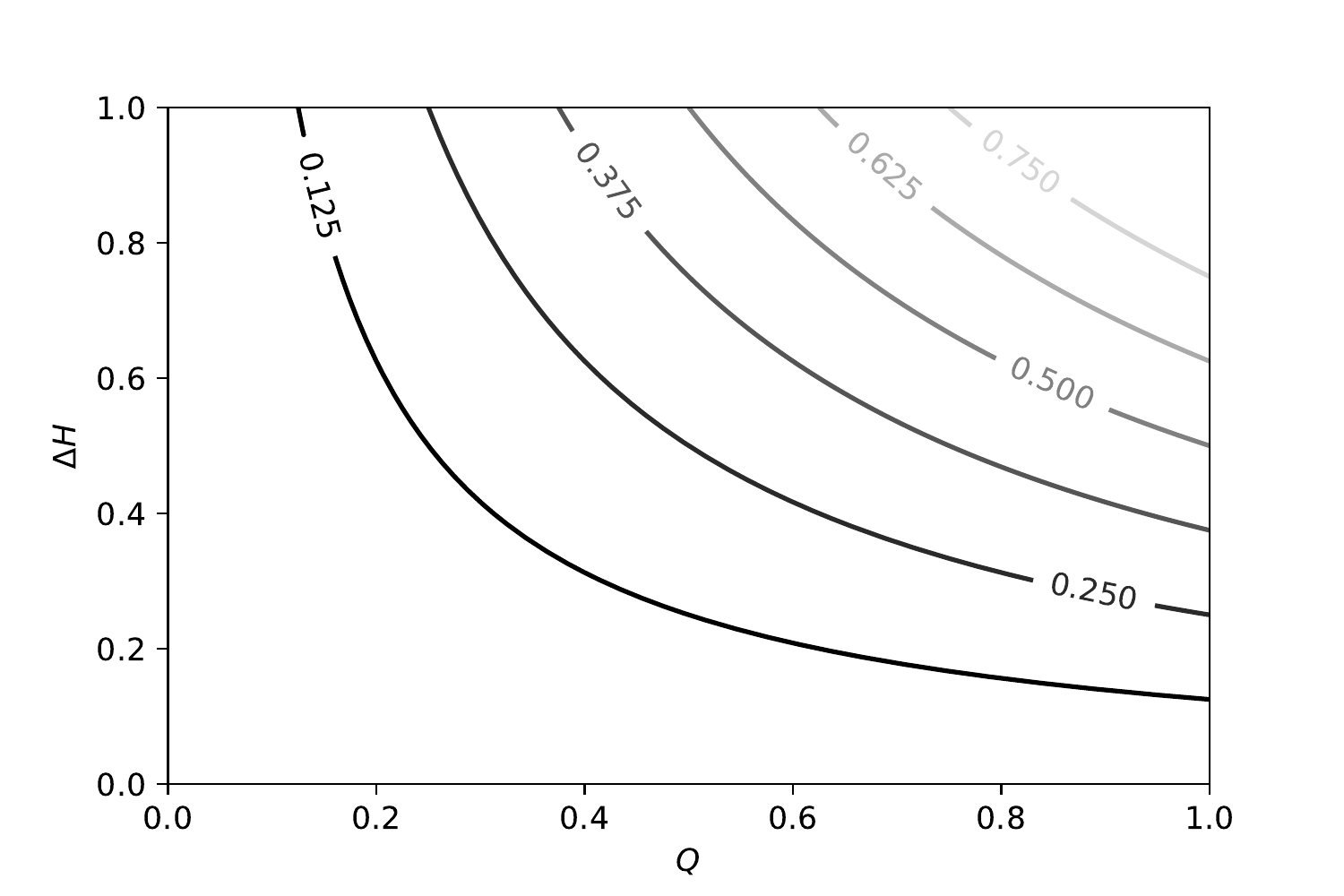}
\caption{Isolines of the function $f=Q\Delta H$ for $Q\in[0,1]$ and $\Delta H \in[0,1]$.}
\label{fig:isolines}       
\end{figure}

Typically, the nonlinearity is removed either by linearizing Equation \ref{eq:core}, for instance by fixing the efficiency and the head difference, i.e., by using $Q(\Delta H)_0$ for some fixed $(\Delta H)_0$ as opposed to the nonlinear product in Equation \ref{eq:core}, or by piecewise linearization, whereby an integer variable is used to select the active linear model \cite{labadie2004optimal}.  Standard linearization is inaccurate for planning problems in which reservoir water levels vary significantly, whereas piecewise linearization introduces integer decision variables.  Integer variables introduce the \emph{curse of dimensionality}, i.e., rapid explosion of computational complexity as new reservoirs and/or time steps are introduced. Other authors include the full nonlinear relation in the optimization model and settle for a locally optimal solution close to a seed solution \cite{Schwanenberg2015}.

This work presents a new approach towards Equation \ref{eq:core}. Based on earlier work on dynamic optimization subject to the shallow water equations \cite{Baayen2018}, we show how a continuation method can be used to solve optimization problems subject to the nonlinear Equation \ref{eq:core}, while retaining control over and insight into the local minimum obtained. 
In a nutshell the continuation method, also known as homotopy, is the process of continuously deforming of a mathematical object into another. The idea is to smoothly transition from a linear optimization problem to a nonlinear one while tracking the solutions obtained throughout this deformation. 
The approach is readily integrated with methods for challenges (I)-(III) such as goal programming and multi-stage stochastic optimization.

\section{Parametric programming with continuation}

In our earlier publication \cite{Baayen2018}, we consider parametric optimization problems of the form
\begin{align}
 \min_x f(x,\theta) \quad & \nonumber\\
 \text{subject to} \quad & c(x,\theta) = 0, \tag{$\mathcal P^\theta$}\label{def:parametric-optimization-problem}\\
& x \geq 0 \nonumber;
\end{align}
where $x$ is the optimization variable, $\theta \in [0,1]$ is a parameter, $f(x,\theta)$ is the objective function and $c(x,\theta) : \mathbb R^{n} \times [0,1] \to \mathbb R^{\ell}$ denotes the set of constraints. 
Throughout we assume that the functions $f$ and $c$ are at least twice differentiable.
This class of optimization problems can be solved using the so-called interior point methods type of algorithms. 
For these one has to solve a sequence of barrier problems
\begin{gather}
\min_x f(x, \theta) - \mu \sum_{i=1}^n \ln (x_i)  \nonumber\\
\text{subject to} 
\quad c(x,\theta) = 0, \tag{$\mathcal P_\mu^\theta$}\label{def:barrier-optimization-problem}\\
\hspace*{15mm}
x \in \mathbb{R}^n; \nonumber
\end{gather}
for a non-negative decreasing sequence of barrier parameters $\mu$ converging to zero. The key idea is that the optimum value of $(\mathcal P_\mu^\theta)$ converges to an optimal solution of $(\mathcal P^\theta)$ as $\mu$ tends to zero.

Consider the Lagrangian function $\mathcal L_\mu (x, \lambda, \theta)$ for the barrier problem (\ref{def:barrier-optimization-problem}), that is 
 $$\mathcal L_\mu (x, \lambda, \theta) := f(x, \theta) - \mu \sum_{i=1}^n \ln (x_i) + \lambda^T c(x,\theta),$$ where $\lambda$ is the vector of Lagrangian multipliers.
Then any solution of (\ref{def:barrier-optimization-problem}) is a solution of the  system of equations:
\begin{eqnarray}\label{eq:primal}
\nabla_{x} \mathcal L_\mu (x, \lambda, \theta)  & = & 0, \label{eq:primal1}\\
c(x, \theta) & = & 0. \nonumber \label{eq:primal2}
\end{eqnarray}

Let $F_\mu(x,\lambda,\theta)$ be the residual of the system of equations (\ref{eq:primal}). 
The system
\[
F_\mu(x,\lambda,\theta)=0
\]
admits a unique solution path in the neighborhood of $x^*$, $\lambda^*$, and $\theta^*$ if the Jacobian $\partial F_\mu(x^*,\lambda^*,\theta^*) / \partial (x,\lambda)$ is nonsingular. 

\begin{definition}\label{def:critical-point}
For some fixed parameters $\hat \theta \in [0,1]$ and $\mu > 0$,
let $\hat x$ be a solution of the parametric barrier problem (\ref{def:barrier-optimization-problem}) and $\hat \lambda$ be the corresponding Lagrange multipliers.  The point $(\hat x,\hat \lambda,\hat \theta)$ is called a \emph{critical point} if the Jacobian $\partial F_\mu(\hat x,\hat \lambda , \hat \theta)/\partial (x, \lambda)$ is singular.
\end{definition}

To characterize the circumstances under which the parametric deformation process works well, i.e., does not admit singular points, we refer to the following two notions introduced in \cite{Baayen2018}:

\begin{definition}\label{def:zero-convex}
We say that the parametric optimization problem (\ref{def:parametric-optimization-problem}) is \emph{zero-convex} if $(\mathcal P^0)$ is a convex optimization problem. 
That is, if $x \mapsto f(x,0)$ is a convex function and if $x \mapsto c(x,0)$ is linear.
\end{definition}

The notion of zero-convexity captures the idea that there should be a unique solution at $\theta=0$, and that such solution can be found using standard methods.  

\begin{definition}\label{def:path-stable}
We say that the parametric optimization problem (\ref{def:parametric-optimization-problem}) is \emph{path stable with respect to the interior point method} if its barrier formulation (\ref{def:barrier-optimization-problem}) does not admit critical points for any $\theta \in [0,1]$ and any $\mu > 0$.
\end{definition}

Path stability implies that any solution path does not bifurcate or terminate and hence it is possible to track a local minimum of the optimization problem ($\mathcal P^\theta$) as we continuously modify the parameter $\theta$. 

Zero-convex and path stable problems can be solved with a simple continuation algorithm as described in \cite{Baayen2018}.
The same approach can be easily extended to parametric optimization problems with inequalities constraints and with bounded variables (see e.g.~\cite{Baayen2018}). 

In the following, we derive a parametric optimization problem that includes the nonlinear generation Equation \ref{eq:core} and that is zero-convex and path stable.

\section{Hydroelectric generation}

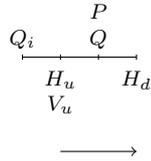
\begin{figure}[ht]
\centering
\begin{tikzpicture}
\draw (0.0,0) -- (1.5,0);
\draw (1 cm - 1 cm,1 pt) -- (1 cm - 1 cm,-1 pt) node[anchor=south] { \begin{tabular}{c} $Q_i$ \end{tabular} };
\draw (2 cm - 1 cm,1 pt) -- (2 cm - 1 cm,-1 pt) node[anchor=south] { \begin{tabular}{c} $P$ \\ $Q$ \end{tabular} };
\draw (1 cm - 0.5 cm,1 pt) -- (1 cm - 0.5 cm,-1 pt) node[anchor=north] {\begin{tabular}{c} $H_u$ \\ $V_u$ \end{tabular} };
\draw (2 cm - 0.5 cm,1 pt) -- (2 cm - 0.5 cm,-1 pt) node[anchor=north] {\begin{tabular}{c} $H_d$ \\ $\quad$ \end{tabular} };
\draw[->] (0.5,-1.25 cm) -- (1.5 cm,-1.25 cm) node[anchor=west] {};
\end{tikzpicture}
\caption{Hydraulic variables governing hydroelectric generation $P$.}
\label{fig:example-grid}
\end{figure}

The instantaneous generation of a hydroelectric turbine is given by Equation (\ref{eq:core}), which we repeat here:
\[
P = g \rho \eta(Q,H_u,H_d) Q \Delta H.
\]
The head difference $\Delta H = H_u - H_d$ is a function of the upstream water level $H_u$ and the downstream water level $H_d$.  These, in turn, typically depend on the volume of water contained in the upstream and downstream sections.  Consider, for instance, the upstream reservoir with volume $V_u$:
\begin{equation}\label{eq:level-volume}
H_u = \Gamma(V_u)
\end{equation}
with a strictly increasing, twice differentiable volume-level relation $\Gamma$. 
As the inverse function $\Gamma^{-1}$ clearly exists, the variable $V_u$ is fully determined by $H_u$. 
Moreover, as a reservoir's surface area generally increases with the water level, $\Gamma$ is typically a concave function.
The volume of water contained in the reservoir, $V_u$, is also influenced by the reservoir outflow $Q$ through the linear mass balance relation:
\begin{equation}\label{eq:mass-balance}
\frac{\partial V_u}{\partial t} = Q_i - Q,
\end{equation}
with upstream inflow $Q_i$.  The upstream inflow could either be a time series boundary condition, or, if it is the outflow from a reservoir further upstream, another optimization variable.  Similarly, $H_d$ could be a downstream boundary condition, or it could be related to the volume of a reservoir further downstream.  These hydraulic variables are shown in Figure 
(\ref{fig:example-grid}).

\medskip
We discretize the equations in time for equidistant timesteps $t_j$ for $j \in \{1, \dots, T\}$. Such discretization is straight forward for Equations~(\ref{eq:core})-(\ref{eq:level-volume}) and implicit for Equation~(\ref{eq:mass-balance}):
\begin{equation}\label{eq:discr:mass-bal}\tag{$c_{3,j}$}
\frac{V_u(t_j) - V_u(t_{j-1})}{\Delta t} - Q_i(t_j) + Q(t_j) = 0.
\end{equation}

\medskip
For technical reasons that will be apparent in the next section, we split the optimization variables into two types: the \emph{proper} and the \emph{redundant} ones. The main difference is that the constraints must be linear with respect with the proper variables, while the redundant variables can have nonlinear relationships.

Let us introduce an alias variable $\widetilde Q$, where $\widetilde Q = Q$.
We define the redundant variables $x_r$ to be $(\widetilde Q, V_u, \Delta H)$.
Note that we can reformulate the turbine efficiency function $\eta(Q,H_u, H_d)$ using the variables $\widetilde Q, V_u$ and $\Delta H$. Indeed, the following simple relationships hold: $Q = \widetilde Q$, $H_u = \Gamma^{-1}(V_u)$ and $H_d =  \Gamma^{-1}(V_u) - \Delta H$. 
Let $\widetilde \eta(\widetilde Q, V_u, \Delta H)$ be the appropriate reformulation of the turbine efficiency function, then we can rewrite Equation~(\ref{eq:core}) as 
\[
P = g \rho \widetilde \eta(\widetilde Q, V_u, \Delta H) \widetilde Q \Delta H.
\]

\medskip
The last ingredient we need in order to be able to apply the continuation method is the linear approximations of the nonlinear equations (\ref{eq:core}) and (\ref{eq:level-volume}). A linear model approximating Equation (\ref{eq:core}) is given by
\[
P(t_j) = g \rho \eta_0 Q(t_j)(\Delta H)_0 
\]
with scalars $\eta_0$ and $(\Delta H)_0$.  A linear level-volume relation (\ref{eq:level-volume}) is given as
\[
H_u(t_j)= \alpha_\Gamma V_u(t_j) + \beta_\Gamma
\]
for some appropriately chosen coefficients $\alpha_\Gamma$ and $\beta_\Gamma$.
We can assume $\alpha_\Gamma$ to be a positive scalar since the function $\Gamma$ is strictly increasing, that is, $\partial \Gamma / \partial V_u$ is strictly positive. 

\medskip
Using the homotopy parameter $\theta \in [0,1]$, we obtain the following parametric constraints:
\begin{align}\label{eq:homotopic-equation-1}\tag{$c_{1,j}$}
P(t_j) - g \rho \big[ & (1-\theta) \eta_0 Q(t_j) (\Delta H)_0 \\
&+ \theta \widetilde \eta(\widetilde Q(t_j), V_u(t_j), \Delta H(t_j)) \widetilde Q(t_j) \Delta H(t_j) \big] = 0 \nonumber
\end{align}
and
\begin{equation}\label{eq:homotopic-equation-2}\tag{$c_{2,j}$}
 H_u(t_j) - (1-\theta) (\alpha_\Gamma V_u(t_j) + \beta_\Gamma) - \theta \Gamma(V_u(t_j)) = 0.
\end{equation}

Beside the equations (\ref{eq:discr:mass-bal}), (\ref{eq:homotopic-equation-1}),  (\ref{eq:homotopic-equation-2}), our system will be constrained by the relations 
\begin{equation}\label{eq:head-diff}\tag{$c_{4,j}$}
\Delta H(t_j) - H_u(t_j) + H_d(t_j) = 0
\end{equation}
and
\begin{equation}\label{eq:alias}\tag{$c_{5,j}$}
Q(t_j) - \tilde{Q}(t_j) = 0,
\end{equation}
for any $j \in \{1, \dots, N\}$.

\section{Analysis of zero-convexity and path stability}

In this section we will show that zero-convexity and path-stability hold for optimization problems containing the constraints (\ref{eq:homotopic-equation-1}) -- (\ref{eq:alias}).  Throughout this section we call the optimization variables $(\widetilde Q, V_u, \Delta H)$ \emph{redundant} and all the other ones \emph{proper}. 

We need the following assumptions:

\begin{itemize}
\item [BND] All the proper optimization variables have a lower bound, an upper bound, or both. None of the redundant variables have bounds.
\item [OBJ] The objective function is of the form 
\[
f(x,\theta) = f_{r}(x_{r},\theta)+ \sum_{k \in K} a_k x_k^2 +\sum_{l \in L} b_l x_l 
\]
with $a_k,b_l \in\mathbb R$ such that $a_k \geq 0$ and $f_{r}(x_{r},\theta)$ is a function of the redundant variables $x_{r}$ such that $x \mapsto f_{r}(x_{r},0)$ is convex.
\item [LIN] All additional constraints are affine functions of the optimization variables.
\item [IND] The gradients of all equality constraints form a linearly independent set.
 \end{itemize}

\begin{lemma}\label{lem:lin}
The gradients for the constraints (\ref{eq:homotopic-equation-1}) -- (\ref{eq:alias}) are linearly independent. Moreover, the constraints (\ref{eq:homotopic-equation-2}), (\ref{eq:head-diff}) and (\ref{eq:alias}) span the space of the redundant variables $x_r$.
\end{lemma}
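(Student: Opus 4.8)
The plan is to read both claims straight off the \emph{sparsity pattern} of the constraints --- that is, a bookkeeping of which optimization variable occurs in which equation. Recall that the proper variables comprise $P(t_j)$, $Q(t_j)$, $H_u(t_j)$ (and $H_d(t_j)$, $Q_i(t_j)$ if those are decision variables rather than data), while the redundant variables are $\widetilde Q(t_j)$, $V_u(t_j)$, $\Delta H(t_j)$, for $j=1,\dots,T$. First I would tabulate the occurrences: $P(t_j)$ appears only in (\ref{eq:homotopic-equation-1}) (with coefficient $1$); $\widetilde Q(t_j)$ only in (\ref{eq:homotopic-equation-1}) and (\ref{eq:alias}); $Q(t_j)$ only in (\ref{eq:homotopic-equation-1}), (\ref{eq:discr:mass-bal}), (\ref{eq:alias}); $H_u(t_j)$ only in (\ref{eq:homotopic-equation-2}) and (\ref{eq:head-diff}); $\Delta H(t_j)$ only in (\ref{eq:homotopic-equation-1}) and (\ref{eq:head-diff}); and $V_u(t_j)$ in (\ref{eq:homotopic-equation-1}), (\ref{eq:homotopic-equation-2}), (\ref{eq:discr:mass-bal}) and (\ref{eq:discr:mass-bal}) at index $j+1$ (through the $V_u(t_{j-1})$ term).

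For linear independence, suppose a linear combination with scalar coefficients $\alpha_j,\beta_j,\gamma_j,\delta_j,\epsilon_j$ of the gradients $\nabla c_{1,j},\dots,\nabla c_{5,j}$ (taken with respect to the full vector of optimization variables) vanishes. Reading off the components in a carefully chosen order peels off the coefficients one family at a time: the $P(t_j)$-component gives $\alpha_j=0$; once $\alpha_j=0$, the nonlinear term of (\ref{eq:homotopic-equation-1}) contributes nothing further, so the $\widetilde Q(t_j)$-component gives $-\epsilon_j=0$; the $Q(t_j)$-component then gives $\gamma_j+\epsilon_j=0$, whence $\gamma_j=0$; the $\Delta H(t_j)$-component gives $\delta_j=0$; and finally the $H_u(t_j)$-component gives $\beta_j-\delta_j=0$, whence $\beta_j=0$. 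Since this holds for every $j$, all coefficients vanish, proving independence (this in particular verifies assumption IND for these constraints).

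For the spanning claim I would restrict attention to the sub-Jacobian of the $3T$ constraints (\ref{eq:homotopic-equation-2}), (\ref{eq:head-diff}), (\ref{eq:alias}) with respect to the $3T$ redundant coordinates $(\widetilde Q(t_j),V_u(t_j),\Delta H(t_j))_{j=1}^T$. By the table above this matrix is block diagonal over $j$, with $j$-th block (rows ordered $c_{2,j},c_{4,j},c_{5,j}$, columns ordered $\widetilde Q(t_j),V_u(t_j),\Delta H(t_j)$)
\[
\begin{pmatrix}
0 & -(1-\theta)\alpha_\Gamma-\theta\,\Gamma'(V_u(t_j)) & 0\\
0 & 0 & 1\\
-1 & 0 & 0
\end{pmatrix},
\]
whose determinant equals $(1-\theta)\alpha_\Gamma+\theta\,\Gamma'(V_u(t_j))$. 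Since $\alpha_\Gamma>0$ and $\Gamma$ is strictly increasing, so that $\Gamma'>0$, this quantity is strictly positive for every $\theta\in[0,1]$ --- including the endpoints $\theta=0$ and $\theta=1$ --- hence each block, and therefore the whole block-diagonal matrix, is nonsingular. A nonsingular $3T\times 3T$ matrix has rows spanning $\mathbb R^{3T}$, which is exactly the assertion that the gradients of (\ref{eq:homotopic-equation-2}), (\ref{eq:head-diff}), (\ref{eq:alias}) span the redundant-variable space $x_r$.

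I expect the only real subtlety to be the bookkeeping itself: one must be certain that $P(t_j)$ (and similarly $\widetilde Q(t_j)$) genuinely appears in no constraint beyond those listed, since this is what makes the peeling argument close, and one must check that the time coupling of (\ref{eq:discr:mass-bal}) through $V_u(t_{j-1})$, together with whatever boundary convention is used at $j=1$ and $j=T$, does not obstruct the chain of implications. It does not: the argument for the proper coefficients terminates already at the $H_u$-component, never needing to resolve the $V_u$-components. The one analytic point I would state explicitly is that the block determinant must be nonzero across the \emph{entire} homotopy and not merely on $(0,1)$, which is immediate from the signs of $\alpha_\Gamma$ and $\Gamma'$.
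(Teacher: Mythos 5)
Your proof is correct and follows essentially the same route as the paper's: both arguments read linear independence off the sparsity pattern by isolating each constraint through a variable (or chain of variables) it alone controls, and both establish the spanning claim from the nonsingularity of the same $3\times 3$ block on $(\widetilde Q,V_u,\Delta H)$ with pivot $(1-\theta)\alpha_\Gamma+\theta\,\Gamma'(V_u)>0$. The only cosmetic difference is your choice of pivot components ($P,\widetilde Q,Q,\Delta H,H_u$ versus the paper's $P,H_u,H_d,Q,Q_i$), which if anything handles the nonzero $Q(t_j)$-entry of $\nabla c_{1,j}$ more carefully than the paper's displayed gradients do.
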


\begin{proof}
We first prove the linear independence. Consider a fix timestep $t_j$ and let $\hat x$ be the vector of the variables $P(t_j), H_u(t_j), H_d(t_j)), Q(t_j), Q_i(t_j)$

\begin{eqnarray*}
\left. \frac{\partial c_{1,j}}{\partial x}\right|_{\hat{x}} &= &
\big( 1, 0, 0, 0, 0 \big) , \\ 
\left. \frac{\partial c_{2,j}}{\partial x}\right|_{\hat{x}} &= & 
\big( 0, 1, 0, 0, 0 \big) , \\ 
\left. \frac{\partial c_{3,j}}{\partial x}\right|_{\hat{x}} &= &
\big( 0, 0, 0, 1, -1\big) , \\ 
\left. \frac{\partial c_{4,j}}{\partial x}\right|_{\hat{x}} &= & 
\big( 0, -1, 1 , 0, 0 \big) , \\ 
\left. \frac{\partial c_{5,j}}{\partial x}\right|_{\hat{x}} &= &
\big( 0, 0, 0, 1, 0 \big), \\ 
\end{eqnarray*}
It is clear that the set of vectors $\{ \partial c_{i,j} / \partial x : i \in \{1, \dots, 5\} \}$ is linearly independent for any $j \in \{1, \dots, T\}$.
Moreover, for any $j \in \{1,\dots,T\}$, since $c_{3,j}$ is the only constraint containing the variable $Q_i(t_j)$ and $\partial c_{i,j} / \partial x \mid_{Q_i(t_j)} = 1$, the gradient of $c_{3,j}$ is linearly independent from set of all the gradients of the other constraints. 
We can conclude the linear independence by noticing that for $i \in \{1,2,4,5\}$ the gradient $\partial c_{i,j} / \partial x$
has non-zero entries only for variables depending on the timestep $t_j$.

We now prove that the constraints (\ref{eq:homotopic-equation-2}), (\ref{eq:head-diff}) and (\ref{eq:alias}) span the space of redundant variables.
Order the redundant variables as $x_r  = (V_u(t_j), \Delta H(t_j), \tilde{Q(t_j)})$, then:
\begin{eqnarray*}
\left. \frac{\partial c_{2,j}}{\partial x}\right|_{x_r} &= & 
\big( \gamma, 0, 0 \big) , \\ 
\left. \frac{\partial c_{4,j}}{\partial x}\right|_{x_r} &= & 
\big( 0, 1, 0  \big) , \\ 
\left. \frac{\partial c_{5,j}}{\partial x}\right|_{x_r} &= &
\big( 0, 0, 1 \big), \\ 
\end{eqnarray*}
where $\gamma = - (1- \theta)\alpha_\Gamma - \theta \partial \Gamma(V_u(t_j)) / \partial V_u(t_j)$. This is strictly different from zero as both $\alpha_\Gamma$ and $\partial \Gamma(V_u(t_j)) / \partial V_u(t_j)$ are positive numbers.
Therefore, (\ref{eq:homotopic-equation-2}), (\ref{eq:head-diff}) and (\ref{eq:alias}) span the space of the redundant variables.
\end{proof}

\begin{lemma}\label{lemma:hessian2}
Assume BND, OBJ, and LIN. 
Let us partition the variables $x$ into $(x_p, x_r)$. 
Then the Hessian of the Lagrangian with respect to the variables $x$, $\nabla_{xx}^2 \mathcal{L}_{\mu}(x,\lambda, \theta)$,
has the following block structure:
\[
\nabla_{xx}^2 \mathcal{L}_{\mu}(x,\lambda, \theta) = \begin{pmatrix} \nabla_{x_{p}x_{p}}^2 \mathcal{L}_{\mu} & 0 \\
0 & \nabla_{x_{r}x_{r}}^2 \mathcal{L}_{\mu}
\end{pmatrix}
\]
where $\nabla_{x_{p}x_{p}}^2 \mathcal{L}_{\mu}$ is a diagonal nonsingular matrix.
\end{lemma}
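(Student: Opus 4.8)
The plan is to expand the Hessian of the Lagrangian as a sum of three pieces — the Hessian of the objective, the Hessian of the logarithmic barrier, and the $\lambda$-weighted sum of the constraint Hessians — and to read off the block structure contributed by each, using throughout the partition $x=(x_p,x_r)$ into proper and redundant variables. I would begin by recording that, in the bounded-variable version of the interior point method, the barrier terms attach only to variables carrying a bound, so by BND they involve the proper variables alone; writing $\mathcal{L}_\mu(x,\lambda,\theta)=f(x,\theta)+B_\mu(x_p)+\lambda^{T}c(x,\theta)$ with $B_\mu$ the barrier associated to the bounds on the proper variables, we have $\nabla^2_{xx}\mathcal{L}_\mu=\nabla^2_{xx}f+\nabla^2_{xx}B_\mu+\sum_k\lambda_k\nabla^2_{xx}c_k$.

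First I would treat the objective. By OBJ, $f=f_r(x_r,\theta)+\sum_{k\in K}a_k x_k^2+\sum_{l\in L}b_l x_l$, where $f_r$ depends only on the redundant variables and the quadratic and linear parts may be assumed to involve proper variables only (any monomial in a redundant variable is absorbed into $f_r$, which preserves convexity at $\theta=0$). Hence $\nabla^2_{xx}f_r$ sits entirely in the $(x_r,x_r)$ block, $\nabla^2_{xx}\sum_{k\in K}a_k x_k^2=\diag(2a_k)$ with $2a_k\geq 0$ on the corresponding proper positions, and the linear part contributes nothing; so $\nabla^2_{xx}f$ carries no proper--redundant coupling and its proper block is diagonal with nonnegative entries.

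Next I would handle the barrier: each summand of $B_\mu$ has the form $-\mu\ln(x_k-l_k)$ or $-\mu\ln(u_k-x_k)$ for a proper variable $x_k$, with second derivative $\mu/(x_k-l_k)^2>0$ (resp.\ $\mu/(u_k-x_k)^2>0$) and vanishing mixed partials, so $\nabla^2_{xx}B_\mu$ is diagonal, supported on the proper block, and — because by BND every proper variable carries at least one such term — strictly positive on that diagonal. Finally, for the constraints, LIN removes all the additional (affine) ones, and among the constraints (\ref{eq:homotopic-equation-1})--(\ref{eq:alias}), the constraints (\ref{eq:discr:mass-bal}), (\ref{eq:head-diff}) and (\ref{eq:alias}) are affine while the only nonlinear terms of (\ref{eq:homotopic-equation-1}) and (\ref{eq:homotopic-equation-2}) are $\widetilde\eta(\widetilde Q,V_u,\Delta H)\widetilde Q\,\Delta H$ and $\Gamma(V_u)$, functions of the redundant variables $\widetilde Q,V_u,\Delta H$ alone; hence $\sum_k\lambda_k\nabla^2_{xx}c_k$ is supported entirely on the $(x_r,x_r)$ block.

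Adding the three contributions, the proper--redundant off-diagonal blocks vanish, which gives the claimed $2\times 2$ block-diagonal form, while the proper block is the sum of a nonnegative diagonal (from the quadratic part of the objective) and a strictly positive diagonal (from the barrier), hence diagonal and nonsingular; everything else is collected into $\nabla^2_{x_rx_r}\mathcal{L}_\mu$, about which nothing further is asserted. The computation itself is essentially bookkeeping; the one point that genuinely needs the hypotheses — rather than being automatic — is the strict positivity of the proper block, which rests on $\mu>0$ together with BND (so that the barrier always contributes to each proper diagonal entry), since individual quadratic coefficients $a_k$ may be zero.
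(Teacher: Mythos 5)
Your proof is correct and follows essentially the same route as the paper's: decompose $\nabla^2_{xx}\mathcal{L}_\mu$ into objective, barrier, and constraint contributions, observe via OBJ and LIN (and the fact that the nonlinear terms of $(c_{1,j})$ and $(c_{2,j})$ involve only redundant variables) that all proper--redundant coupling vanishes, and conclude nonsingularity of the proper block from the strictly positive barrier diagonal guaranteed by BND. The paper's proof is just a terser statement of the same bookkeeping; your version makes explicit the points it leaves implicit.
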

\begin{proof}
Due to OBJ and LIN, the only non-zero off-diagonal entries of the Hessian are the ones corresponding to the redundant constraints $\nabla_{x_{r}x_{r}}^2 \mathcal{L}_{\mu}$.

Assumption BND implies that the second derivatives of the logarithmic barrier function give a positive contribution to all the diagonal entries of the matrix $\nabla_{x_{p}x_{p}}^2 \mathcal{L}_{\mu}$.
Moreover, OBJ implies that the second derivative of the objective function can only have a positive contribution to the diagonal entries of $\nabla_{x_{p}x_{p}}^2 \mathcal{L}_{\mu}$  and hence this matrix is diagonal and nonsingular. 
\end{proof}

\begin{proposition}\label{thm:main-result}
Assume BND, OBJ, LIN, and IND.
Then the Jacobian matrix $\partial F_{\mu}(x,\lambda,\theta)/ \partial (x,\lambda)$ is nonsingular for all feasible $(x,\lambda)$ and all $\theta \in [0,1]$.
\end{proposition}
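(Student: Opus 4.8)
The plan is to recognize $\partial F_\mu/\partial(x,\lambda)$ as a bordered (saddle-point) matrix and reduce its nonsingularity to a statement about a reduced Hessian. Since $\nabla_x \mathcal L_\mu = \nabla_x f - \mu(1/x_i)_i + J^T\lambda$ with $J := \partial c/\partial x$, one has $\partial(\nabla_x\mathcal L_\mu)/\partial\lambda = J^T$ and $\partial c/\partial\lambda = 0$, so
\[
\frac{\partial F_\mu}{\partial(x,\lambda)} = K := \begin{pmatrix} \nabla_{xx}^2\mathcal L_\mu & J^T \\ J & 0\end{pmatrix}.
\]
I would then invoke the classical characterization of such matrices: if $J$ has full row rank and the columns of $Z$ form a basis of $\ker J$, then $K$ is nonsingular \emph{if and only if} $Z^T\nabla_{xx}^2\mathcal L_\mu Z$ is nonsingular, and in particular $K$ is nonsingular whenever that reduced Hessian is positive definite. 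The proof thus reduces to two claims: (a) $J$ has full row rank, and (b) $\nabla_{xx}^2\mathcal L_\mu$ is positive definite on $\ker J$.

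Claim (a) is exactly assumption IND (and, for the concrete constraint system, the first half of Lemma~\ref{lem:lin}). For (b) I would combine Lemmas~\ref{lem:lin} and~\ref{lemma:hessian2}. By Lemma~\ref{lemma:hessian2}, $\nabla_{xx}^2\mathcal L_\mu = \diag(H_{pp},H_{rr})$ with $H_{pp}$ diagonal; under BND every proper variable contributes a strictly positive barrier term $\mu/x_i^2$ to its diagonal entry and under OBJ the remaining contributions $2a_k$ are nonnegative, so $H_{pp}$ is positive definite. Take $z=(z_p,z_r)\in\ker J$, $z\neq 0$. By the second half of Lemma~\ref{lem:lin} the rows of $J$ arising from $(c_{2,j})$, $(c_{4,j})$, $(c_{5,j})$, restricted to the redundant columns, form a square nonsingular block $J_{2,r}$; hence $Jz=0$ forces $z_r = -J_{2,r}^{-1}J_{2,p}z_p$, so $z_p=0$ would imply $z_r=0$ and $z=0$. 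Therefore $z_p\neq 0$ and
\[
z^T\nabla_{xx}^2\mathcal L_\mu\, z = z_p^T H_{pp} z_p + z_r^T H_{rr} z_r
\quad\text{with}\quad z_p^T H_{pp} z_p > 0 .
\]

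The genuine obstacle is to rule out that $z_r^T H_{rr} z_r$ cancels this positive term, i.e.\ to show the reduced Hessian is nonsingular. The block $H_{rr}$ is indefinite in general, collecting $\nabla_{x_r x_r}^2 f_r$ at the running value of $\theta$ (convex only at $\theta=0$) together with the multiplier-weighted curvature of the nonlinear homotopy terms in $(c_{1,j})$ and $(c_{2,j})$. My plan to handle this is to use the nonsingular block $J_{2,r}$ to take a Schur complement of $K$ that eliminates the redundant variables together with $(c_{2,j})$, $(c_{4,j})$, $(c_{5,j})$; the result is the KKT matrix of a problem in the proper variables only, in which the redundant curvature enters merely as the congruence $J_{2,p}^T J_{2,r}^{-T} H_{rr} J_{2,r}^{-1} J_{2,p}$ added to the barrier-dominated positive definite $H_{pp}$. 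One then bounds the multipliers $\lambda$ at a feasible $(x,\lambda)$ via the stationarity relation $\nabla_x\mathcal L_\mu=0$ and shows that the $H_{pp}$-term dominates on $\ker J$ uniformly in $\theta\in[0,1]$. I expect this last domination estimate — weighing the barrier curvature against the multiplier-weighted constraint curvature along the whole homotopy — to be the technical crux; the rest is bookkeeping around the two lemmas.
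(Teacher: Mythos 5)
Your reduction to the bordered-matrix criterion is sound, and the preliminary facts you establish --- full row rank of $J$ from IND, positive definiteness of the diagonal block $H_{pp}$ from BND and OBJ, and the implication that $z\in\ker J$ with $z_p=0$ forces $z=0$ via the square nonsingular redundant block of $(c_{2,j})$, $(c_{4,j})$, $(c_{5,j})$ --- are all correct and line up with Lemmas \ref{lem:lin} and \ref{lemma:hessian2}. But the step you yourself flag as the crux, namely that $z_r^T H_{rr} z_r$ cannot cancel $z_p^T H_{pp} z_p$ so that the reduced Hessian $Z^T\nabla^2_{xx}\mathcal L_\mu Z$ is nonsingular, is genuinely missing, and the route you sketch to close it cannot work. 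At an arbitrary feasible point the multipliers $\lambda$ are unconstrained, $H_{rr}$ depends affinely on $\lambda$ through the Hessians of the nonlinear constraints $(c_{1,j})$ and $(c_{2,j})$, and the barrier curvature $\mu/x_i^2$ feeding $H_{pp}$ admits no positive lower bound uniform in $x$; hence no domination estimate of the kind you propose can hold for all feasible $(x,\lambda)$ and all $\theta\in[0,1]$. The definiteness-on-the-kernel route is a dead end for this statement.

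The paper closes exactly this step by a different mechanism that sidesteps curvature altogether: because the rows of $B$ coming from $(c_{2,j})$, $(c_{4,j})$, $(c_{5,j})$ carry a square nonsingular block in the redundant columns (second half of Lemma \ref{lem:lin}), one may add multiples of those constraint rows to the $x_r$-rows of the KKT matrix and thereby replace the troublesome block $H_{rr}$ by an arbitrarily chosen nonsingular matrix $D$, without disturbing the $\lambda$-columns. The $(1,1)$ block then becomes block-triangular with nonsingular diagonal blocks $H_{pp}$ and $D$, hence nonsingular, and a Schur-complement rank count finishes the proof. In short, the redundant variables are eliminated \emph{through the constraints} rather than controlled \emph{through curvature}; that is the idea your proposal lacks. (Your suspicion that something delicate happens here is not misplaced: the paper's concluding identity $\rank(BA^{-1}B^T)=\rank B$ is automatic only for symmetric definite $A$, while the row-reduced $A$ is merely triangular --- but in any case the intended resolution is the constraint-based elimination, not a multiplier bound.)
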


\begin{proof}
The Jacobian matrix $\partial F_{\mu}(x,\lambda,\theta)/ \partial (x,\lambda)$ has the following block structure:
\[
\frac{\partial F_{\mu}(x,\lambda,\theta)}{\partial (x,\lambda)} =
\begin{pmatrix}
\nabla_{xx}^2 \mathcal{L}_{\mu} & B^T \\
B & 0 \\
\end{pmatrix},
\]
where $\nabla_{xx}^2 \mathcal{L}_{\mu}$ has the block-diagonal structure as in Lemma \ref{lemma:hessian2} and $B = \nabla_x c(x, \theta)$. 

From Lemma~\ref{lem:lin}, we know that there exists a subset of constraints $c^S$ such that $\nabla_{x_r} c^S$ has full-rank. 
Then, through elementary row operations, we can transform the Jacobian such that $\nabla^2_{x_r x_r} \mathcal L_\mu$ becomes a nonsingular, upper-diagonal matrix $D$ and 
\[
\frac{\partial F_{\mu}(x,\lambda,\theta)}{\partial (x,\lambda)} =
\begin{pmatrix}
A & B^T \\
B & 0 \\
\end{pmatrix},
\quad \text{ where } \quad 
A:=
\begin{pmatrix}
\nabla_{x_p x_p}^2 \mathcal{L}_{\mu} & C \\
0 & D
\end{pmatrix}.
\]
By construction, $A$ is a nonsingular, upper-diagonal matrix. 
Taking the Schur complement with respect to the matrix $A$ and since (IND) implies that 
$B$ is full rank, we have
\[
\rank \frac{\partial F_{\mu}(x,\lambda, \theta)}{\partial (x,\lambda)} = \rank A + \rank (B A^{-1} B^T) =\rank A +\rank B = |x| + |c|,
\]
which is exactly the dimensionality of our Jacobian.  Hence, the Jacobian matrix $\partial F_{\mu}(x,\lambda, \theta) / \partial (x,\lambda)$ is nonsingular.
\end{proof}

\begin{theorem}
Assume BND, OBJ, LIN, and IND.  Then the optimization problem $F_{\mu}(x,\lambda,\theta)=0$ is zero-convex and path stable.
\end{theorem}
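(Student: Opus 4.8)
The plan is to verify the two defining properties from Definition \ref{def:zero-convex} and Definition \ref{def:path-stable} separately, leaning on the three lemmas already established. Path stability is the easier half: Proposition \ref{thm:main-result} states precisely that the Jacobian $\partial F_\mu(x,\lambda,\theta)/\partial(x,\lambda)$ is nonsingular for all feasible $(x,\lambda)$ and all $\theta \in [0,1]$ (and all $\mu > 0$, since $\mu$ only enters the barrier terms, whose positive contribution to $\nabla^2_{x_p x_p}\mathcal L_\mu$ was already used in Lemma \ref{lemma:hessian2} without any constraint on its magnitude). By Definition \ref{def:critical-point}, the absence of such singular Jacobians is exactly the statement that there are no critical points, which by Definition \ref{def:path-stable} is path stability. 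So this part is a one-line invocation.

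For zero-convexity I would set $\theta = 0$ in the constraint system and check the two requirements of Definition \ref{def:zero-convex}: that $x \mapsto f(x,0)$ is convex and that $x \mapsto c(x,0)$ is linear (affine). For the objective, assumption OBJ writes $f(x,0) = f_r(x_r,0) + \sum_{k\in K} a_k x_k^2 + \sum_{l\in L} b_l x_l$; the quadratic terms are convex because $a_k \geq 0$, the linear terms are convex trivially, and $f_r(x_r,0)$ is convex by hypothesis, so the sum is convex. For the constraints, at $\theta = 0$ equation (\ref{eq:homotopic-equation-1}) collapses to $P(t_j) - g\rho\,\eta_0\, Q(t_j)(\Delta H)_0 = 0$, which is linear, and (\ref{eq:homotopic-equation-2}) collapses to $H_u(t_j) - \alpha_\Gamma V_u(t_j) - \beta_\Gamma = 0$, which is affine; constraints (\ref{eq:discr:mass-bal}), (\ref{eq:head-diff}), (\ref{eq:alias}) are affine as written, independently of $\theta$; and assumption LIN guarantees any remaining constraints are affine. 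Hence $c(\cdot,0)$ is affine, and since an affine equality constraint defines a convex (indeed flat) feasible set, $(\mathcal P^0)$ is a convex optimization problem.

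I do not anticipate a genuine obstacle here — the theorem is essentially a packaging of the preceding results. The one point requiring a little care is the bookkeeping that the hypotheses BND, OBJ, LIN, IND feeding Proposition \ref{thm:main-result} are the same ones assumed in the theorem, and that Lemma \ref{lem:lin} already discharges IND for the specific constraint family (\ref{eq:homotopic-equation-1})--(\ref{eq:alias}), so that in the concrete hydropower setting IND need not be assumed separately but follows. I would note this explicitly so the reader sees that the abstract proposition specializes cleanly to the constraint system built in Section 4. With both halves in hand, the conclusion that $F_\mu(x,\lambda,\theta) = 0$ — i.e.\ the parametric problem (\ref{def:parametric-optimization-problem}) with these constraints — is zero-convex and path stable follows immediately, and the continuation algorithm of \cite{Baayen2018} applies.
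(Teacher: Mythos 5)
Your proposal is correct and follows essentially the same route as the paper's own (very terse) proof: zero-convexity from OBJ, LIN and the fact that the constraints (\ref{eq:homotopic-equation-1})--(\ref{eq:alias}) are linear at $\theta=0$, and path stability as an immediate consequence of Proposition~\ref{thm:main-result}. The only slight overstatement is your aside that Lemma~\ref{lem:lin} makes IND superfluous --- that lemma covers only the constraints (\ref{eq:homotopic-equation-1})--(\ref{eq:alias}), whereas IND must also account for any additional affine constraints admitted under LIN, so it is still assumed in the theorem.
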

\begin{proof}
Zero-convexity follows directly by OBJ, LIN and the way the constraints (\ref{eq:homotopic-equation-1}) -- (\ref{eq:alias}) were constructed.  
Path stability follows from Proposition~\ref{thm:main-result}.
\end{proof}

\begin{remark}
In production, we can drop the alias variable $\widetilde{Q}$.  If this were not the case, we could create a bifurcation in the non-aliased problem, and introduce it into the aliased problem, which would be a contradiction.
\end{remark}

\section{Solution algorithm}
\label{section:algorithm}

We now summarize the full solution algorithm \cite{Baayen2018}:

\begin{enumerate}
\item Set $\theta=0$.
\item Store seed solution, if any.
\item Continue until $\theta=1$:
\begin{enumerate}
\item Retrieve stored solution and set as initial guess.
\item Solve and store solution.
\item Increase $\theta$.
\end{enumerate}
\end{enumerate}

Note that the subproblem for the first step of the algorithm has linear constraints.  As such, a feasible seed solution is readily obtained using standard techniques such as linear programming.  This subproblem is also convex, whence every solution is also globally optimal.  The latter condition ensures a well-defined starting point for the homotopy process.

The homotopy parameter $\theta$ is increased with a fixed increment every time.  This increment should be chosen to be sufficiently small for the algorithm to converge.

In the next section, we illustrate the method with an example problem.

\section{Example}

We consider a two-reservoir system with a constant upstream
inflow boundary condition as in Figure \ref{fig:example-topology}. All reservoir outflow is routed though turbines; for clarity of the results, spillways are not considered here (but are readily added). The model parameters and initial conditions are summarized
in Table \ref{tab:example-parameter}.
\begin{figure}[ht]
\centering
\begin{tikzpicture}
\draw[->] (-1, 0.75) node[anchor=west]{$\, \, Q_i$}
  -- (0, 0.25) node{};
  
\draw (0,0.5) node[anchor=north]{}
  -- (0,0) node[anchor=north]{}
  -- (0.5,0) node[anchor=north]{upstream} node[anchor=south]{$H_u$}
  -- (1,0) node[anchor=north]{}
  -- (1,0.5) node[anchor=south]{};

\draw[->] (1, 0.25) node[anchor=west]{$\, \, Q_u$}
  -- (2, -0.25) node{};

\draw (2,0.0) node[anchor=north]{}
  -- (2,-0.5) node[anchor=north]{}
  -- (2.5,-0.5) node[anchor=north]{downstream} node[anchor=south]{$H_d$}
  -- (3,-0.5) node[anchor=north]{}
  -- (3,0.0) node[anchor=south]{};
  
\draw[->] (3, -0.25) node[anchor=west]{$\, \, Q_d$}
  -- (4, -0.75) node{};
  
\draw (3.75, -0.75) node{}
  -- (4.25, -0.75) node[anchor=north]{$H_t$};

\end{tikzpicture}
\caption{Schematic overview of example reservoir system.}
\label{fig:example-topology}
\end{figure}
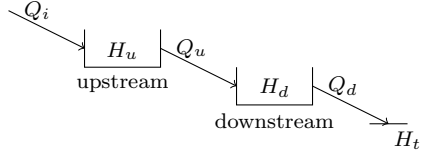

\begin{table}[ht]
\begin{tabular}{l|l|l}
\centering
Parameter & Value & Description \\
\hline
$ N $ & $48$  & Number of time steps \\
$ \Delta t $ & $1$ h & Time step size \\
$ Q_i $ & $100$ m$^3$/s  & Upstream inflow \\
$ \eta_{u,d} $ &  $ 0.85 $ & Turbine efficiency (upstream and downstream) \\
$ g$ &  $ 9.81 $ m/s$^2$ &  Gravitational constant \\
$ \rho $ &  $ 1000 $ kg/m$^3$ & Density of water \\
$ A_{u} $ & $ 10^5 $ m$^2$  & Surface area of the upstream reservoir \\
$ A_{d} $ & $10^5  $ m$^2$ & Surface area of the downstream reservoir\\
$ H_{b, u} $ & $1000$  m & Bottom level of the upstream reservoir \\
$ H_{b, d} $ & $900 $ m & Bottom level of the downstream reservoir \\
$ H_{t} $ & $800$ m & Tailwater level  \\
$ (\Delta H_u)_0 $ & $ 80$  m &  Upstream level difference in linear model \\
$ (\Delta H_d)_0 $ & $ 125$  m  &  Downstream head difference in linear model \\
$ H_{u}(t_{0}) $ & $1005$  m  & Initial level of the upstream reservoir \\
$ H_{d}(t_{0}) $ & $925$  m  & Initial level of the downstream reservoir \\
$ H_{u,max} $ & $ 1030$  m  & Maximum level of the upstream reservoir \\
$ H_{d,max} $ & $930$  m  & Maximum level of the downstream reservoir \\
$ Q_{u,max} $ & $ 100 $  m$^3$/s  & Maximum release of the upstream reservoir \\
$ Q_{d,max} $ & $100$  m$^3$/s  & Maximum release of the downstream reservoir \\
$ P_{u,max} $ & $ 1 $  GW & Maximum generation of the upstream reservoir \\
$ P_{d,max} $ & $1$ GW  & Maximum generation of the downstream reservoir 
\end{tabular}
\caption{Parameters for the example problem.}
\label{tab:example-parameter}
\end{table}

Our optimization objective is to maximize the total hydroelectric generation over the planning horizon:
\[
\max \sum_{j=1}^N P_{u}(t_j) + P_{d}(t_j) 
\]
subject to the additional constraints 
\begin{align*}\label{eq:constraints}
H_{b,u} & \le H_{u} (t_j) \le H_{u,max}, &  H_{b,d}  & \le H_{d} (t_j)  \le H_{d,max},\\
0 & \le Q_{u} (t_j) \le Q_{u,max} , &  0 & \le  Q_{d} (t_j) \le Q_{d,max},\\
0 & \le P_{u} (t_j) \le P_{u,max} , & 0 & \le  P_{d} (t_j) \le P_{d,max}
\end{align*}
for all $t_j$.

For this example, we apply a constant turbine efficiency as well as the linear volume-level relation
\begin{align*}
V_u (t_j) & = A_u \big( H_u (t_j) -H_{b,u} \big), \\
V_d (t_j)  & = A_d \big( H_d (t_j) -H_{b,d} \big).
\end{align*}

\begin{figure}[ht]
\centering
\subfloat[Upstream reservoir level $H_u = H_u (\theta , t)$]{\includegraphics[scale=0.4]{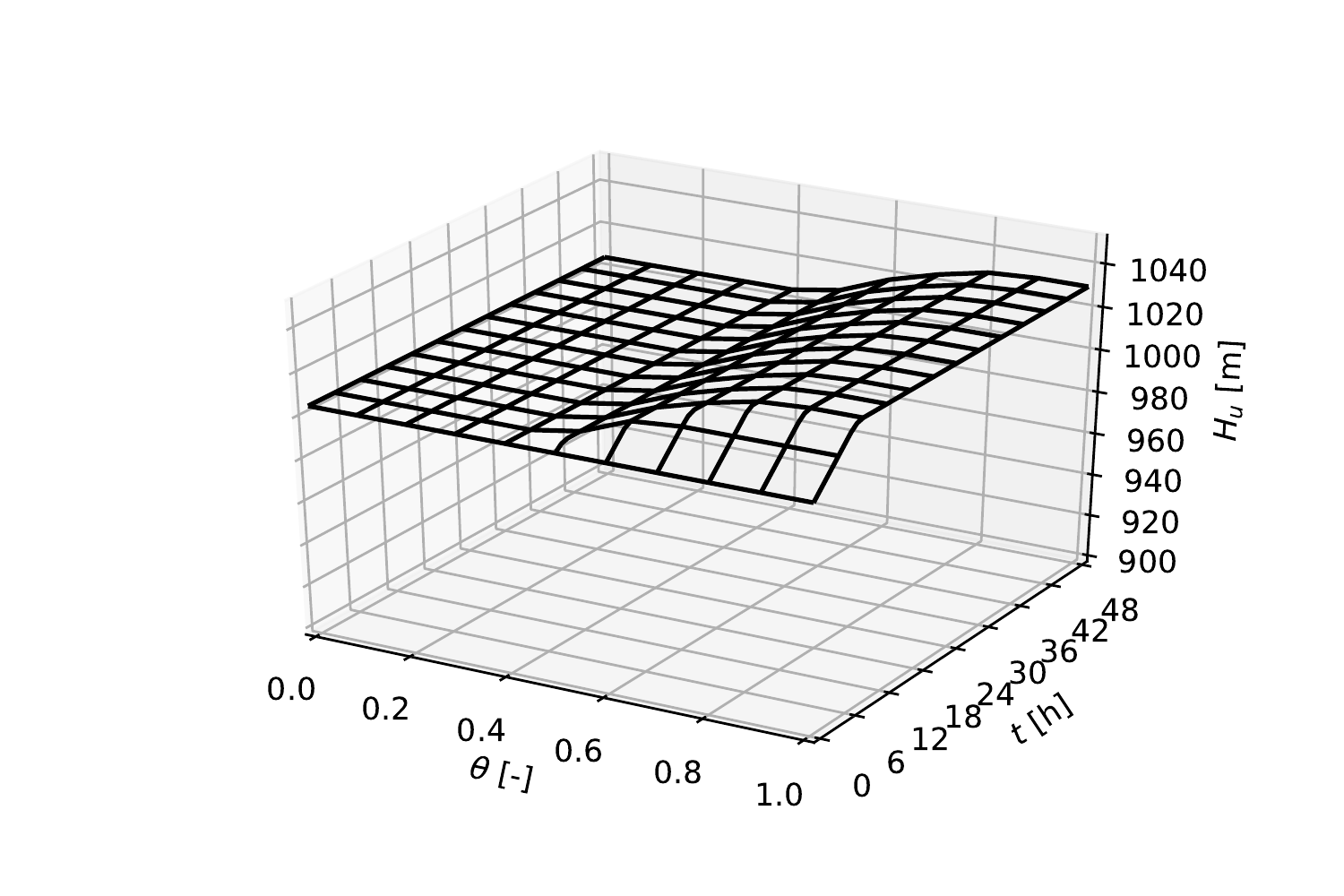}}
\subfloat[Downstream reservoir level $H_d = H_d (\theta , t)$]{\includegraphics[scale=0.4]{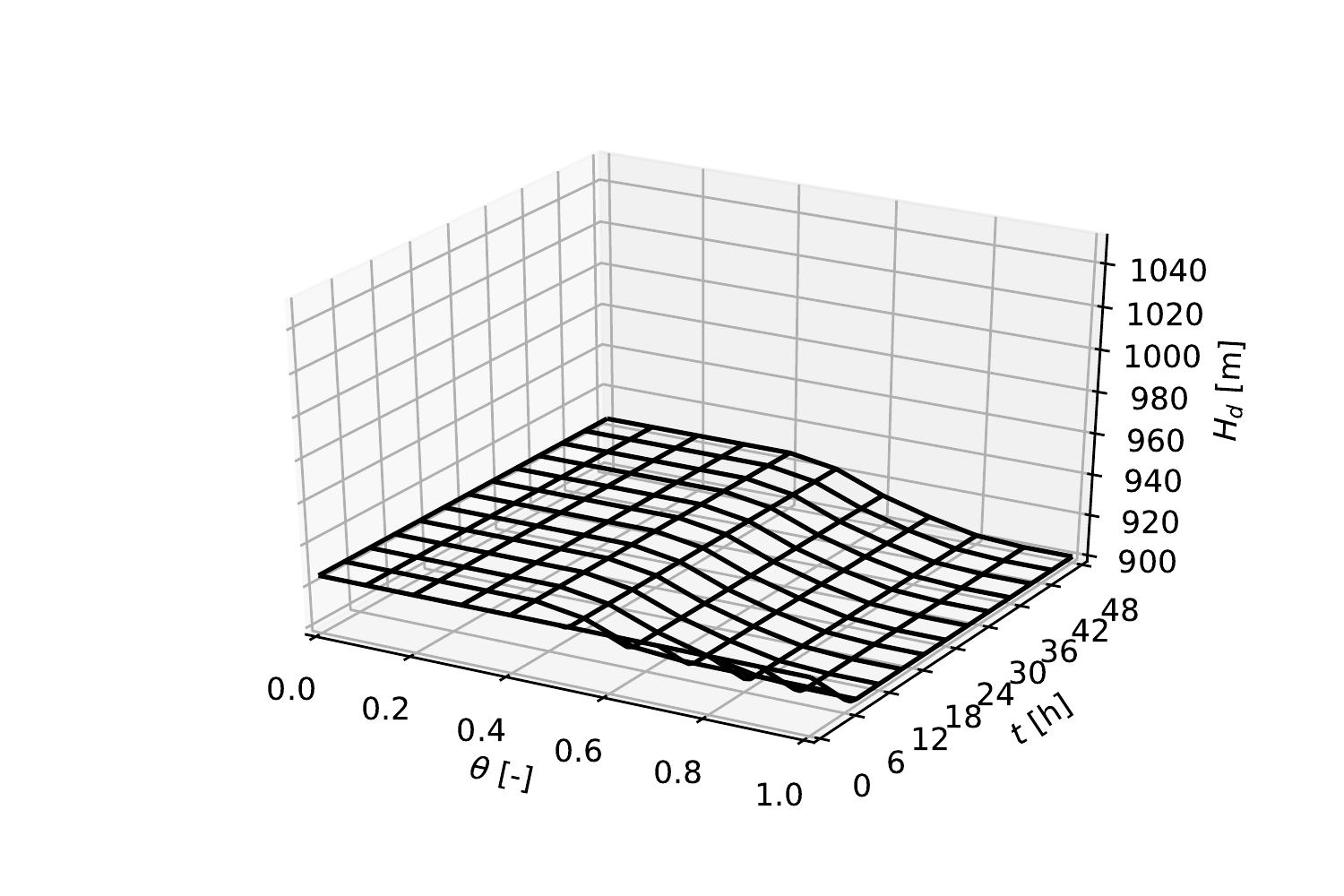}}
\\
\subfloat[Upstream release $Q_u = Q_u (\theta , t)$]{\includegraphics[scale=0.4]{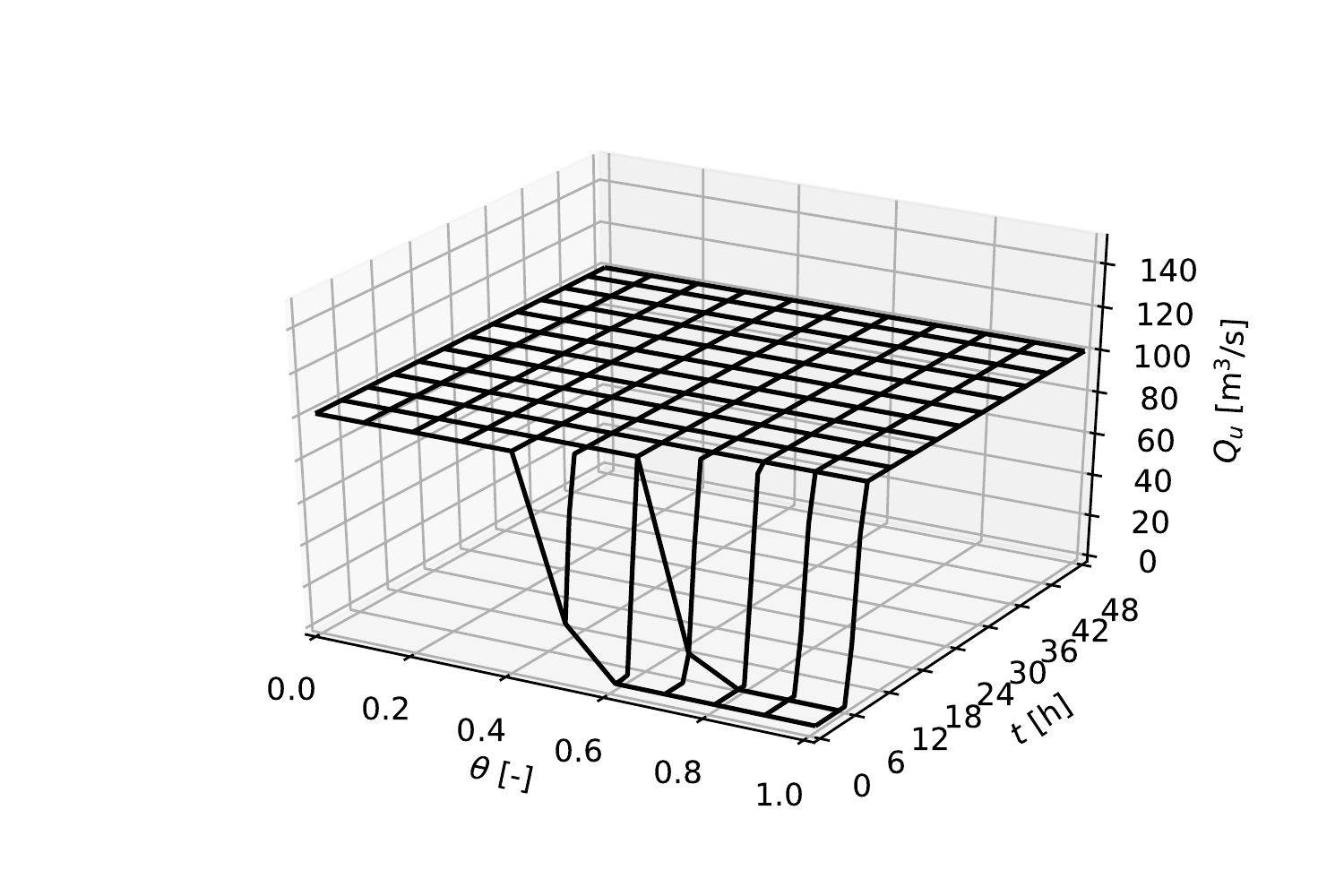}}
\subfloat[Downstream release $Q_d = Q_d (\theta , t)$]{\includegraphics[scale=0.4]{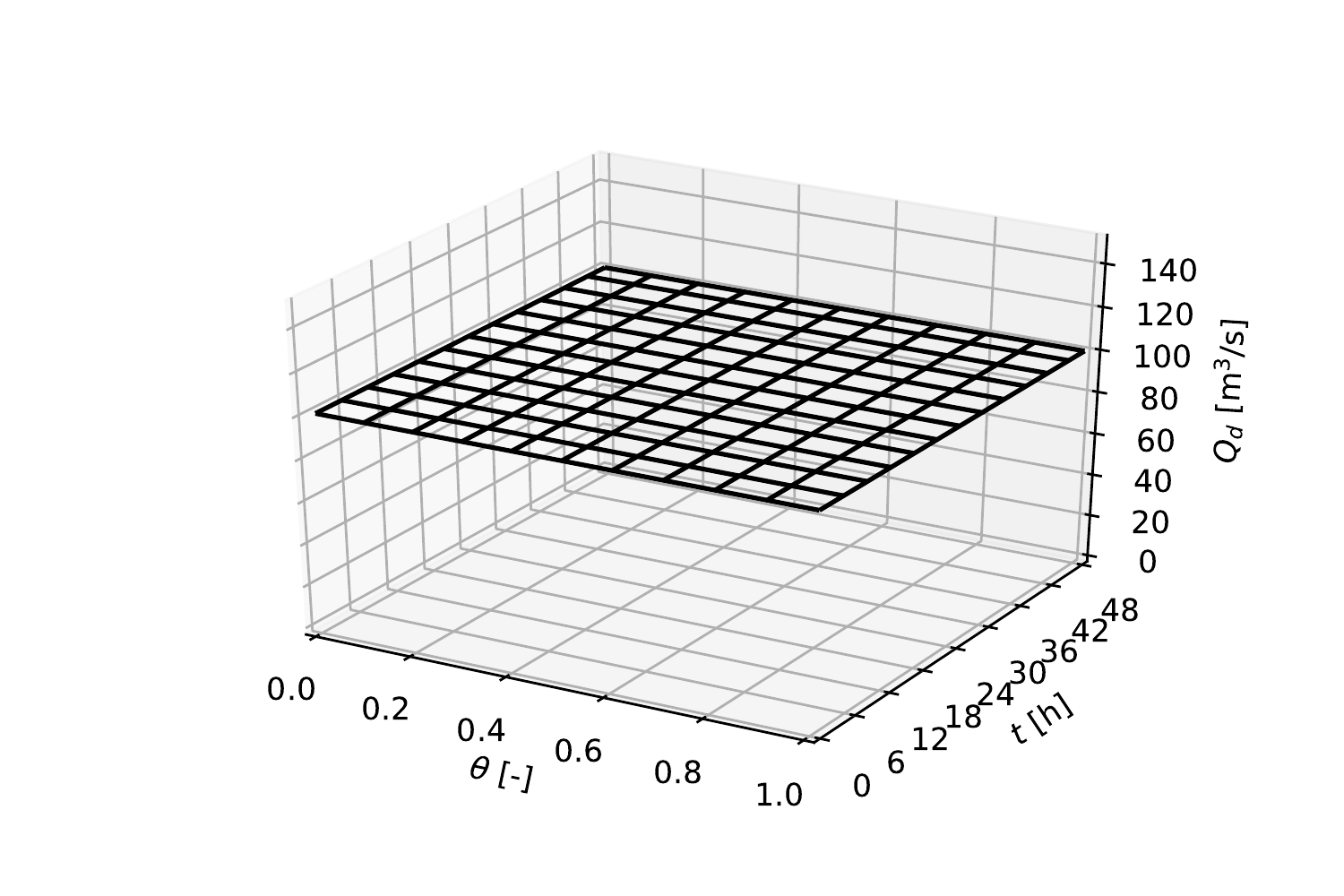}}
\\
\subfloat[Upstream generation $P_u = P_u (\theta , t)$]{\includegraphics[scale=0.4]{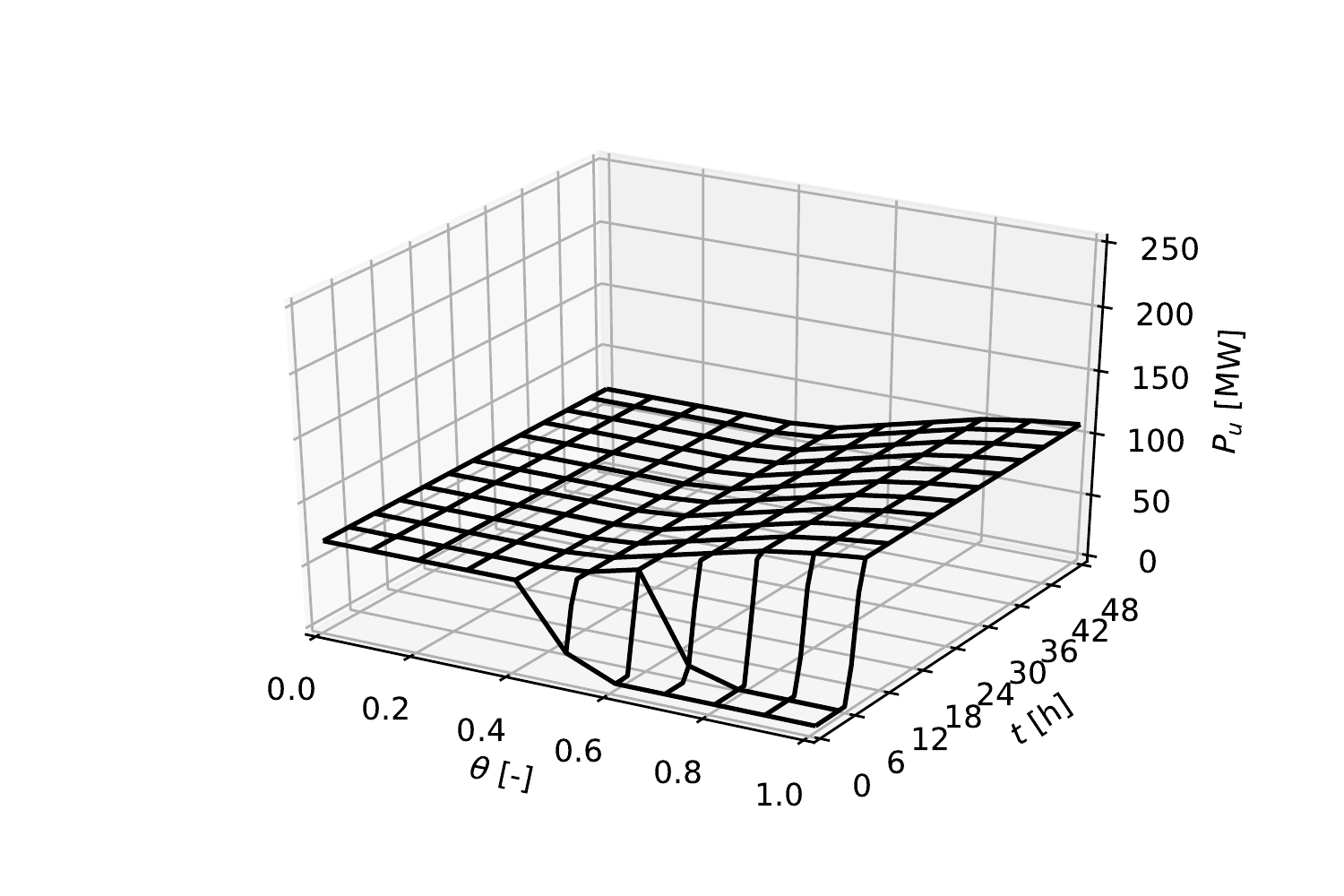}}
\subfloat[Downstream generation $P_d = P_d (\theta , t)$]{\includegraphics[scale=0.4]{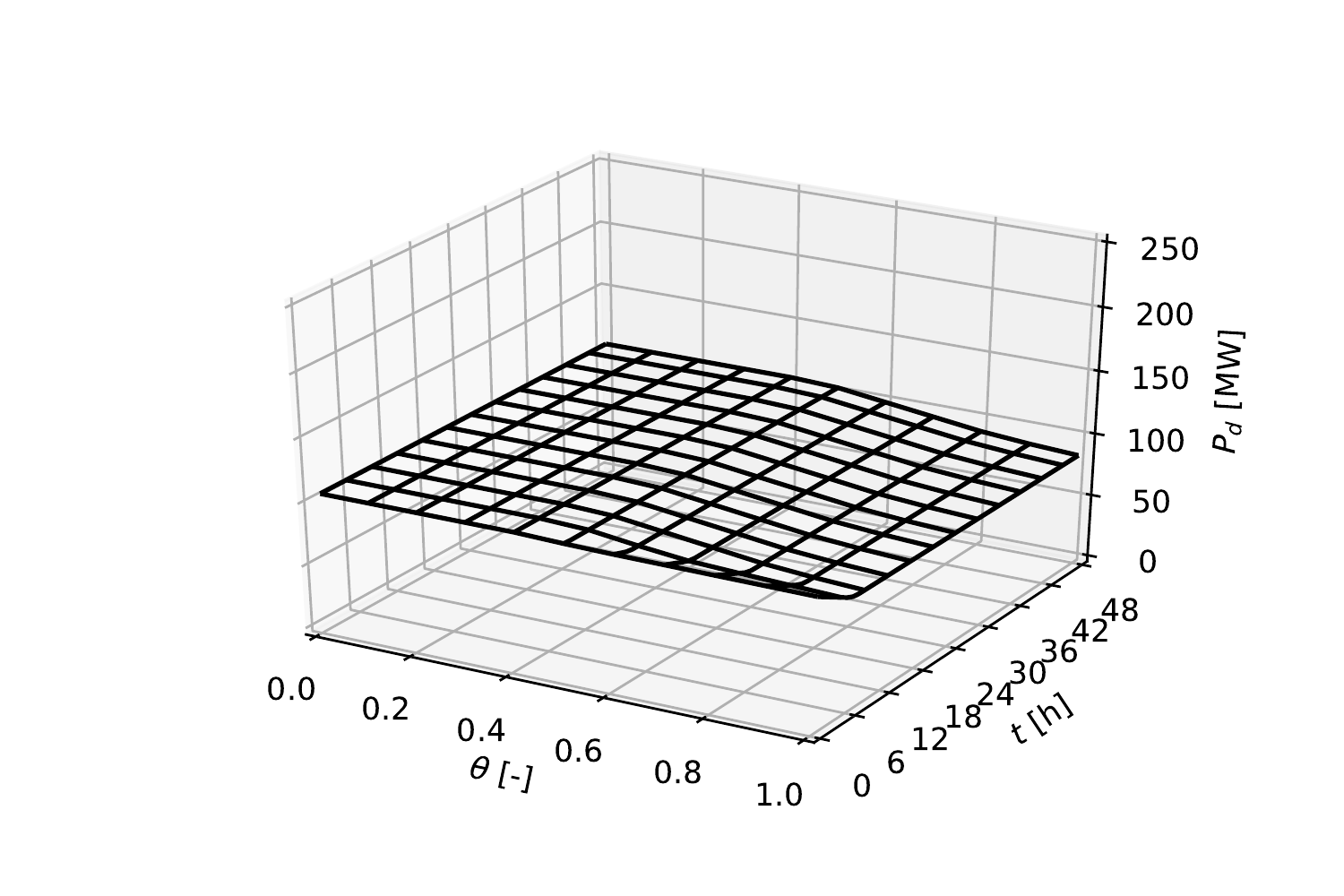}}
\\
\subfloat[Total generation $P = P (\theta , t)$]{\includegraphics[scale=0.4]{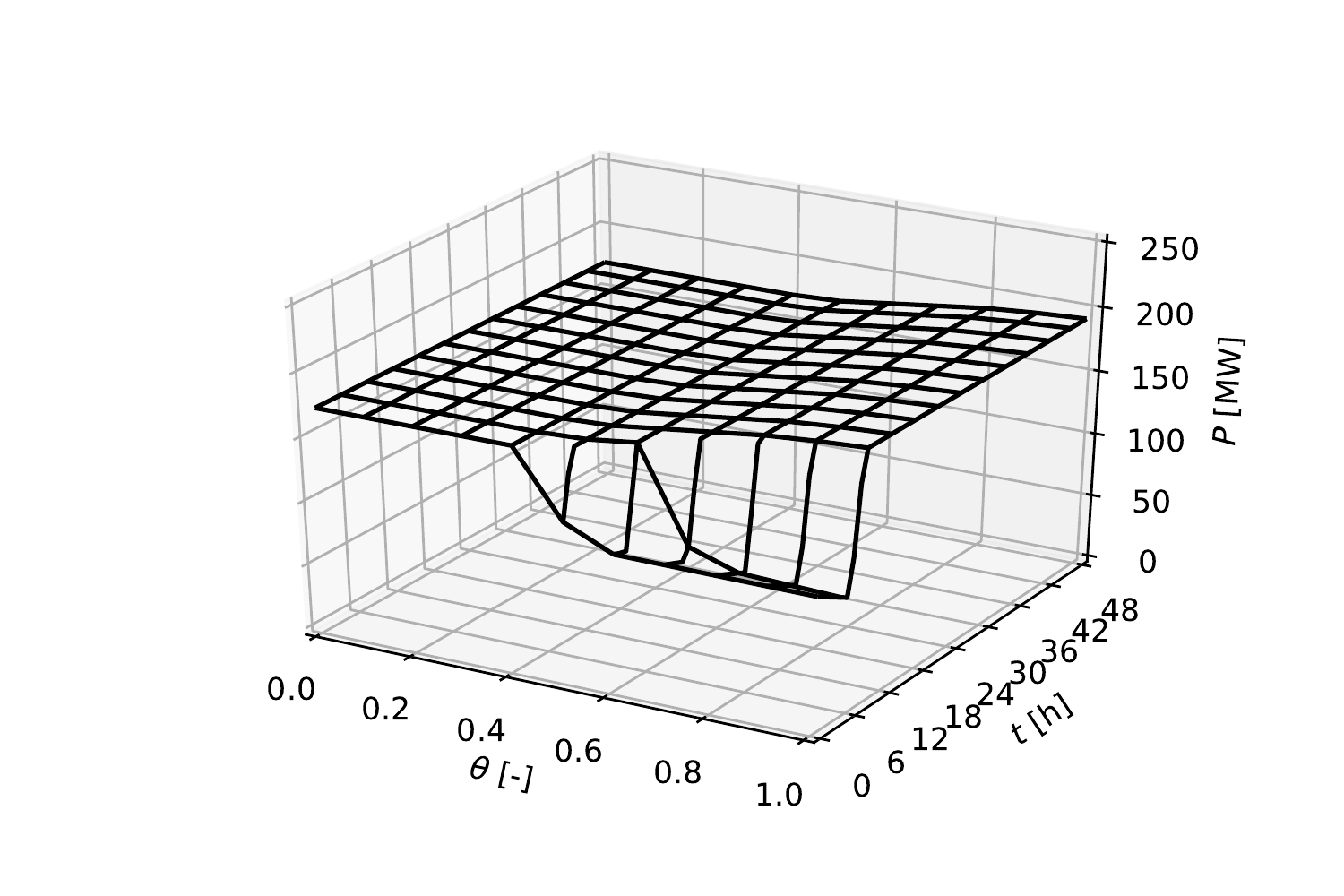}}
\subfloat[Upsteram inflow $Q_i$]{\includegraphics[scale=0.4]{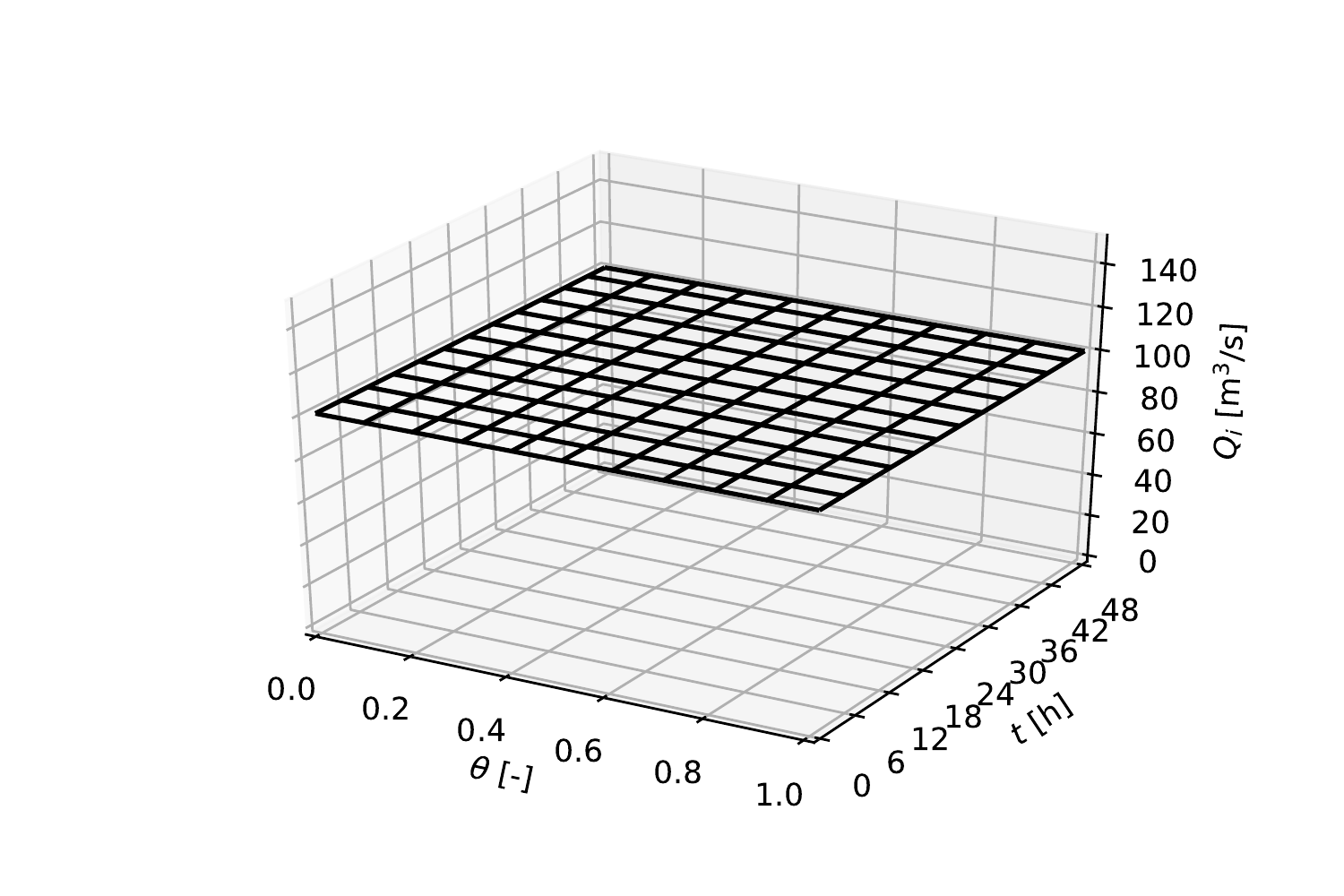}}
\caption{Evolution of hydraulic variables as a function of the homotopy parameter
 $\theta$ and time $t$} \label{fig:theta,time}
\end{figure}

This optimization problem was coded in Python using the CasADi package \cite{Andersson2012} for algorithmic differentiation, and connected to the IPOPT optimization solver \cite{Wachter2006}. The results of applying the method are shown in Figure \ref{fig:theta,time}, where the simultaneous deformation of all hydraulic variables is clearly visible. 

For the linear model, i.e., for $\theta=0$, generation is a function of turbine flow only and hence the optimal solution is to realize the maximum allowed turbine flow at all times.  For the fully non-linear model, i.e., for $\theta=1$, generation is proportional to the product of turbine flow \emph{and} head difference.  In this case, the optimal solution is to \emph{first} realize a higher head difference, and \emph{then} to produce the maximum allowed turbine flow. During the $48$ hours considered in this planning problem, the non-linear solution produces approximately $310$ MWh  more energy than the linear solution (an increase of approximately $4$\%).

On a MacBook Pro with 2.9 GHz Intel Core i5 CPU, the example takes approximately 550 ms to solve.

The complete source code is available online at \url{https://github.com/jbaayen/hydropower-homotopy-example/blob/master/Example.ipynb}.

\section{Conclusions}

In this paper we presented a methodology to carry out numerical optimal control of reservoir systems with hydroelectric generation.  By introducing a suitable parametric deformation, we were able to derive a deterministic algorithm to arrive at a local optimum of the nonconvex optimization problem.  This optimum is consistent in the sense that it is fully determined by the choice of parametric deformation. The approach hinges on the notions of \emph{zero-convexity} and \emph{path stability}, i.e., the property of a parametric optimization problem that I) it has linear constraints at the parameter starting value and II) that when computing a path of solutions as a function of the parameter, no bifurcations arise.  Path stability ensures determinism and numerical stability of the solution process, and renders this class of numerical optimal control problems suitable for deployment in decision support systems based on model predictive control.  Finally, with an example problem, we illustrate how even in simple cases taking head variation into account can result in a $4$\% increase of generation output.

\section{Conflict of Interest}

The authors declare that they have no conflict of interest.

\bibliographystyle{spmpsci}
\bibliography{paper}

\end{document}